\newtheorem{theorem}{Theorem}
\newtheorem{lemma}{Lemma}
\newtheorem{corollary}{Corollary}
\DeclareMathOperator{\Id}{Id}
\newcommand{\diamd}{\overrightarrow{\mathrm{diam}}}
\DeclareMathOperator{\F}{\mathbb{F}}
\title{On a polynomial bound for the orbital diameter of primitive affine groups}
\author{Saveliy V. Skresanov}
\date{}
\begin{document}
\maketitle

\begin{abstract}
	Let \( VG \) be a finite primitive affine permutation group, where \( V \) is a vector space of
	dimension \( d \) over the prime field \( \F_p \) and \( G \) is an irreducible linear group on \( V \).
	We prove that if \( p \) divides \( |G| \), then the diameters of all nondiagonal orbital graphs of \( VG \) are at most~\( 9d^3 \).
	This improves an earlier exponential bound by A.~Mar\'oti and the author.
\end{abstract}

\section{Introduction}

Given a finite permutation group \( H \) acting on the set \( \Omega \) one associates with it the set of
\emph{orbital graphs}, that is, orbits of \( H \) on the set of ordered pairs~\( \Omega \times \Omega \). Every such graph
is oriented and orbital graphs with vertices lying in the diagonal \( \{ (x, x) \mid x \in \Omega \} \) are called \emph{diagonal}.

Higman's criterion~\cite{Higman} implies that the group \( H \) is primitive if and only if each nondiagonal orbital graph
is (strongly) connected, in particular, each nondiagonal orbital graph \( \Gamma \) has correctly defined diameter.
By \emph{directed diameter} we will mean the maximal distance between a pair of vertices of~\( \Gamma \)
where paths must preserve edge orientation, while \emph{undirected diameter} will be the similar notion but where edge orientations are ignored.

Recall that a finite primitive permutation group is \emph{affine}, if it decomposes into a semidirect product \( VG \)
of a regular subgroup \( V \) isomorphic to a vector space of dimension \( d \) over the prime field \( \F_p \) and an irreducible matrix group \( G \) acting on \( V \).
The elementary abelian subgroup \( V \) can be identified with the set of points, so we may assume that \( VG \) is a permutation group on \( V \).
More precisely, \( VG \) consists of the following permutations
\[ VG = \{ v \mapsto b + vA \mid b \in V,\, A \in G \}, \]
where \( b \) and \( v \) denote vectors from \( V \), and \( A \) denotes a matrix from \( G \) (notice that we use the row-vector convention).
It follows immediately from this description that the orbital graphs of \( VG \) are Cayley graphs over \( V \) with orbits of \( G \) on \( V \)
playing the roles of connection sets; the zero orbit \( \{ 0 \} \) corresponds to the diagonal orbital graph under this identification.
We will write \( \diamd(V, G) \) for the maximum of directed diameters of orbital graphs of the affine group~\( VG \).

Bounding orbital diameters of primitive permutation groups has applications to model theory, see~\cite{LMT}, where the authors
described infinite classes of finite permutation groups of bounded orbital diameter. One is also interested in providing upper and lower
bounds for orbital diameters of specific classes of permutation groups, see~\cite{Sh} for the symmetric and alternating groups and~\cite{Re}
for the groups of diagonal type. Here the class of affine groups deserves special interest as the description of affine groups with bounded
orbital diameter from~\cite{LMT} is still incomplete. We also refer the reader to the introduction of~\cite{AS} for a more thorough overview
of related problems, in particular, to the connections of orbital diameters with some questions from number theory and combinatorics.

In~\cite[Theorem~1.1]{AS} A.~Mar\'oti and the author proved that the diameters of orbital graphs of a finite primitive affine group \( VG \)
are bounded exponentially in terms of \( \dim V \) and \( \log |V| / \log |G| \). In fact, if \( G \) contains a composition factor,
isomorphic to a finite simple group of Lie type in characteristic \( p \), then one can be more precise and bound the diameters in terms of~\( \dim V \) only by~\( 2^{22(\dim V)^3} \).
The main result of this note shows that if \( p \) divides \( |G| \), one can obtain a polynomial bound on the diameters.

\begin{theorem}\label{main}
	Let \( VG \) be a finite primitive affine permutation group, where \( V \simeq \F_p^d \) for a prime \( p \),
	and \( G \) is an irreducible linear group on \( V \) such that \( p \) divides \( |G| \).
	Then \( \diamd(V, G) \leq 9d^3 \).
\end{theorem}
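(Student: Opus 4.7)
The plan is to express every vector of $V$ as a short sum of elements of a fixed orbital $\Delta$. By Cauchy's theorem applied to $p \mid |G|$, there is $u \in G$ of order $p$; setting $n := u - \Id$, in characteristic $p$ one has $(\Id + n)^p = \Id + n^p = \Id$, so $n$ is nilpotent with $n^p = 0$, and its nilpotency index $m$ satisfies $m \le \min(p,d) \le d$. Fixing a nondiagonal orbital $\Delta = w^G$, I would first invoke irreducibility: since $\Delta$ is $G$-invariant and spans $V$ as an $\F_p$-vector space, I can choose $w \in \Delta$ with $wn \ne 0$. Then the $\langle u \rangle$-orbit $w, wu, \ldots, wu^{p-1}$ has full length $p$, and every partial sum $\sigma_k := \sum_{i=0}^{k-1} wu^i = \sum_{j=0}^{k-1} \binom{k}{j+1} wn^j$ lies in the $k$-fold Minkowski sum $k \cdot \Delta$.

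The core step is to use these partial-sum identities (and their $G$-conjugates) to extract, for each $j = 0, 1, \ldots, m-1$, a short additive representation of $wn^j$---and more generally of $(wn^j)g$ for $g \in G$---as a sum of at most $O(d^2)$ elements of $\Delta$, independent of $p$. Granted this, since the $G$-translates of the Jordan chain $\{wn^j : 0 \le j < m\}$ generate $V$ (by $G$-irreducibility applied to $wn^{m-1} \ne 0$), every $y \in V$ is an $\F_p$-linear combination of at most $d$ such translates; composing with the $O(d^2)$ expression of each and replacing negative scalars via zero-sum relations produces a sum of at most $9d^3$ elements of $\Delta$.

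The hard part is obtaining the $O(d^2)$ bound in the core step \emph{independently of} $p$: the naive approach, using the identity $\sum_{i=0}^{p-1} wu^i = wn^{p-1} = 0$ (valid whenever $p > m$), yields $-w = \sum_{i=1}^{p-1} wu^i$, costing $p-1$ summands, which is useless for large $p$. To overcome this, I expect the proof to produce shorter zero-sum relations inside the connection set by combining $\langle u \rangle$-orbit identities with their $G$-conjugates, equivalently by exhibiting small-weight non-negative representatives of $-\Id$ modulo the right annihilator of $w$ in $\F_p[G]$. It is precisely the presence of a $p$-element in the irreducible $G$ that makes such short relations available and replaces the exponential bound of~\cite{AS} with a polynomial one.
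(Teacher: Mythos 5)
Your setup is on the right track and matches the paper's opening moves: Cauchy's theorem gives a $p$-element $A$ with $n=A-\Id$ nilpotent of index $k\le d$, one picks $v$ in the orbit with $vn^{k-1}\ne 0$ (note: you want $wn^{k-1}\ne 0$, not merely $wn\ne 0$, so that the top Jordan layer is accessible), and the binomial identity $A^x=\sum_{i=0}^{k-1}\binom{x}{i}n^i$ is exactly the tool used. But the argument has a genuine gap at precisely the point you flag as ``the hard part'': you never produce the short, $p$-independent representation, you only express the hope that combining $\langle u\rangle$-orbit identities with their $G$-conjugates will yield short zero-sum relations. That is not how the obstruction is overcome, and there is no reason to expect such conjugate-combination relations to exist with $O(d^2)$ terms for a general irreducible $G$. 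The actual mechanism is analytic: one asks for $x_1,\dots,x_m\in\F_p$ with $\sum_j\binom{x_j}{i}=0$ for $0<i<k-1$ and $=\alpha$ for $i=k-1$, which (after triangular change of variables) reduces to the simultaneous power-sum system $\sum_j x_j^i=b_i$, $i=1,\dots,k-1$. A theorem of Linnik--Karatsuba, resting on Weil's bound for exponential sums, solves this with $m=O(k\log k)\le 4d^2$ variables provided $p\ge 9k^2$. Then $\sum_j vA^{x_j}=mv+\alpha\, vn^{k-1}$ shows $m\cdot\mathcal{O}$ contains an entire affine line, and summing $d$ of its $G$-conjugates (irreducibility) gives $dm\cdot\mathcal{O}=V$, i.e.\ diameter at most $4d^3$. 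The complementary range $p<9d^2$ is handled by the trivial bound $\diamd(V,G)\le d(p-1)<9d^3$, obtained by walking $p-1$ steps along one line and again summing $d$ conjugates. Your proposal is missing both halves of this dichotomy.

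A secondary issue: your final assembly step (``every $y\in V$ is an $\F_p$-linear combination of at most $d$ translates \dots\ replacing negative scalars via zero-sum relations'') reintroduces a dependence on $p$, since realizing a scalar multiple $\lambda x$ by repeated addition costs up to $p-1$ summands. The paper avoids this by arranging that the sumset contains a \emph{whole line} $b+\F_p v$ at once (the parameter $\alpha$ above ranges over all of $\F_p$ at fixed cost $m$), after which adding $d$ conjugate lines fills out $V$ with no further scalar multiplications. Without that device, even granting your core step for the single vectors $wn^j$, the conclusion would not follow with a bound polynomial in $d$ alone.
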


Clearly the main result applies when \( G \) contains a composition factor isomorphic to a finite simple group of Lie type in characteristic~\( p \).
In this case~\cite{AS} provided an upper bound on the undirected diameter only, so our main result is also a qualitative improvement.

The main obstacle in improving the upper bound when \( p \) does not divide \( |G| \) is the Waring problem in finite fields.
Any bound on the orbital diameter in this case would imply an upper bound for that problem,
while the best known bound for the Waring problem in finite fields is exponential, see~\cite{CC}.

We note that an upper bound in Theorem~\ref{main} is close to being optimal, but there is still some room for improvement.
For an odd prime \( p \) the wreath product of cyclic groups \( G = C_2 \wr C_p \) acts irreducibly and imprimitively on \( V = \F_p^p \),
and by~\cite[Proposition~7.1]{AS} has diameter \( \diamd(V, G) = p(p-1)/2 \). It follows that the upper bound on the diameter must grow
at least quadratically in~\( d \). Nevertheless, it is still possible that the best upper bound should be quadratic in~\( d \),
and as one will see from the proof, if \( p \geq 9d^2 \) then we have a bound of the form \( 4d^2 \log d \), supporting this view.

\section{Proof of the main result}

For two subsets \( \Delta, \Pi \) of a vector space \( V \), let \( \Delta + \Pi \) denote
the set of pairwise sums. For a positive integer \( m \) we write \( m \cdot \Delta \) for the \( m \)-fold sumset \( \Delta + \dots + \Delta \).
As orbital graphs of a primitive affine group \( VG \) are Cayley graphs, the orbital diameter can be computed
in terms of sumsets by the following formula (cf.~\cite[Formula~(1)]{AS}):
\[ \diamd(V, G) = \min \{ m \in \mathbb{N} \mid m \cdot (\mathcal{O}_i \cup \{ 0 \}) = V \text{ for all } i = 1, \dots, r \}, \tag{$\star$} \]
where \( \mathcal{O}_1, \dots, \mathcal{O}_r \) are all nonzero orbits of \( G \) on \( V \). The formula is well-defined,
since the irreducibility of \( G \) implies that each \( \mathcal{O}_i \), \( i = 1, \dots, r \), spans \( V \).

The following lemma is a slight generalization of the result~\cite[Lemma~2.2]{AS}.

\begin{lemma}\label{subspace}
	Let \( G \) be an irreducible linear group on \( V \simeq \F_p^d \) and
	let \( \Delta \) be a \( G \)-invariant subset of \( V \). If for some \( m \geq 1 \)
	the sumset \( m \cdot \Delta \) contains a nontrivial line \( b + \F_p v \) for \( b \in V \), \( v \neq 0\), then \( dm \cdot \Delta = V \).
\end{lemma}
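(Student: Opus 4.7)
The plan is to use the $G$-invariance of $\Delta$ (and hence of each sumset $m \cdot \Delta$, since $G$ acts linearly) to translate the line $b + \F_p v$ into $d$ different directions and then sum the resulting lines. Specifically, because $G$ acts by linear maps, we have $g(m \cdot \Delta) = m \cdot g(\Delta) = m \cdot \Delta$ for every $g \in G$, so $g(b) + \F_p g(v) \subseteq m \cdot \Delta$ for all $g \in G$.

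Next I would exploit irreducibility. Since $v \neq 0$, the $G$-orbit $\{g(v) \mid g \in G\}$ spans a nonzero $G$-invariant subspace of $V$, which by irreducibility must be all of $V$. Therefore we can choose elements $g_1, \ldots, g_d \in G$ such that $g_1(v), \ldots, g_d(v)$ form an $\F_p$-basis of $V$.

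Now I would form the sum of the $d$ translated lines: for each $i$ we have $g_i(b) + \F_p g_i(v) \subseteq m \cdot \Delta$, and hence
\[
dm \cdot \Delta \;\supseteq\; \sum_{i=1}^d \bigl( g_i(b) + \F_p g_i(v) \bigr) \;=\; \Bigl(\sum_{i=1}^d g_i(b)\Bigr) + \sum_{i=1}^d \F_p g_i(v) \;=\; \Bigl(\sum_{i=1}^d g_i(b)\Bigr) + V \;=\; V,
\]
where the penultimate equality uses that the $g_i(v)$ span $V$ over $\F_p$. This gives $dm \cdot \Delta = V$, as required.

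There is really no serious obstacle here: the argument is almost entirely formal once one notices that sumsets of a $G$-invariant set are themselves $G$-invariant. The only point requiring care is that irreducibility is used in the slightly stronger form ``the $G$-orbit of any nonzero vector contains a basis'' (rather than merely spans), which is immediate by choosing a maximal linearly independent subset of the orbit and invoking irreducibility to conclude it has size $d$.
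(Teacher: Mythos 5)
Your proof is correct and follows essentially the same route as the paper: translate the line by elements $g_1,\dots,g_d$ chosen so that the images of $v$ span $V$, and add the resulting $d$ lines inside $dm \cdot \Delta$. The only cosmetic difference is that you insist the $g_i(v)$ form a basis, whereas the paper only requires $\F_p v^{g_1} + \dots + \F_p v^{g_d} = V$; both are immediate from irreducibility.
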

\begin{proof}
	Since \( G \) acts irreducibly on \( V \), the sum of one-dimensional subspaces \( \sum_{g \in G} \F_p v^g \) equals \( V \).
	Therefore there exist elements \( g_1, \dots, g_d \in G \) such that \( \F_p v^{g_1} + \dots + \F_p v^{g_d} = V \).
	As \( (b + \F_p v)^{g_i} \subseteq m \cdot \Delta \) for all \( i = 1, \dots, d \), we have
	\begin{multline*}
		dm \cdot \Delta \supseteq (b+\F_p v)^{g_1} + \dots + (b+\F_p v)^{g_d} = b^{g_1} + \dots + b^{g_d} + \F_p v^{g_1} + \dots + \F_p v^{g_d} =\\
		= b^{g_1} + \dots + b^{g_d} + V = V.
	\end{multline*}
	The claim is proved.
\end{proof}

\begin{corollary}\label{trivial}
	\( \diamd(V, G) \leq d(p-1) \).
\end{corollary}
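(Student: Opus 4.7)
The plan is to apply Lemma~\ref{subspace} with $m = p-1$ and $\Delta = \mathcal{O}_i \cup \{0\}$ for each nonzero orbit $\mathcal{O}_i$ of $G$, and then invoke formula $(\star)$.

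First I would fix a nonzero orbit $\mathcal{O}_i$ and pick any $v \in \mathcal{O}_i$. The sumset $(p-1) \cdot (\mathcal{O}_i \cup \{0\})$ contains all vectors of the form $kv + (p-1-k) \cdot 0 = kv$ for $0 \leq k \leq p-1$, which is exactly the line $\F_p v$. Note that $\mathcal{O}_i \cup \{0\}$ is $G$-invariant since $\mathcal{O}_i$ is a $G$-orbit and $0$ is fixed by $G$, so Lemma~\ref{subspace} applies with $b = 0$ and yields $d(p-1) \cdot (\mathcal{O}_i \cup \{0\}) = V$.

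Since this holds for every nonzero orbit $\mathcal{O}_i$ of $G$ on $V$, formula $(\star)$ gives $\diamd(V, G) \leq d(p-1)$, as required.

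There is essentially no obstacle here: the only minor point is to recall that taking partial sums with zeros is allowed because $0$ has been adjoined to $\Delta$, which is precisely what makes the line through the origin accessible at sumset level $p-1$ rather than something larger.
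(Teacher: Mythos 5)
Your proof is correct and follows exactly the paper's argument: adjoin $0$ to a nonzero orbit, observe that the $(p-1)$-fold sumset contains the line $\F_p v$ through the origin, and finish with Lemma~\ref{subspace} and formula~$(\star)$. The paper's own proof is just a terser version of the same reasoning.
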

\begin{proof}
	If \( \mathcal{O} \) is a nonzero orbit of \( G \) on \( V \), then \( (p-1) \cdot (\mathcal{O} \cup \{ 0 \}) \) contains
	a nontrivial line (passing through zero). Then \( d(p-1) \cdot (\mathcal{O} \cup \{ 0 \}) = V \) and the claim follows from Formula~\( (\star) \).
\end{proof}

The next lemma is a consequence of a result of Linnik~\cite{Linnik} about simultaneous solutions of diagonal forms.
The proof relies on Weil's bound for exponential sums, and here we cite a version with effective constants due to Karatsuba.
Let \( \log k \) denote the natural logarithm of~\( k \).

\begin{lemma}[{\cite[Lemma~2]{Kar}}]\label{forms}
	For any \( k \geq 1 \), \( b_1, \dots, b_k \in \F_p \), \( m > 4k \log k \) and a prime \( p \) such that \( p \geq 9k^2 \)
	the system
	\[
		\begin{cases}
			x_1 + \dots + x_m = b_1,\\
			x_1^2 + \dots + x_m^2 = b_2,\\
			\vdots\\
			x_1^k + \dots + x_m^k = b_k.
		\end{cases}
	\]
	has a solution \( x_1, \dots, x_m \in \F_p \).
\end{lemma}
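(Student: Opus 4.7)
The plan is to apply the standard circle-method argument over \( \F_p \): count solutions by orthogonality of additive characters, isolate the main term, and bound the error via Weil's estimate. Set \( e_p(y) = \exp(2\pi i y / p) \) and \( S(t) = \sum_{x \in \F_p} e_p(t_1 x + t_2 x^2 + \dots + t_k x^k) \). Orthogonality expresses the number \( N \) of solutions \( (x_1, \dots, x_m) \in \F_p^m \) of the system as
\[ N = \frac{1}{p^k} \sum_{t \in \F_p^k} e_p(-t_1 b_1 - \dots - t_k b_k) \cdot S(t)^m, \]
and the term \( t = 0 \) contributes the main term \( p^{m-k} \). Thus it suffices to bound the contribution of \( t \neq 0 \) by less than~\( p^{m-k} \).

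For \( t \neq 0 \), let \( j \) be the largest index with \( t_j \neq 0 \); then \( \sum_i t_i x^i \) has degree \( j \leq k \), and the hypothesis \( p \geq 9k^2 > k \geq j \) ensures \( \gcd(j, p) = 1 \). Weil's bound (in its effective form) gives \( |S(t)| \leq (j - 1)\sqrt{p} \). Stratifying the error sum by \( j \) and noting that at most \( p^{j} \) tuples satisfy \( t_j \neq 0 \) with \( t_i = 0 \) for \( i > j \), one obtains
\[ \bigl| N - p^{m-k} \bigr| \leq \frac{1}{p^k} \sum_{j = 2}^{k} p^{j} (j - 1)^m p^{m/2} \leq k (k - 1)^m p^{m/2}. \]
It remains to show \( k (k-1)^m p^{m/2} < p^{m-k} \); after taking logarithms this becomes \( m \bigl( \tfrac{1}{2}\log p - \log(k-1) \bigr) > k \log p + \log k \). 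The hypothesis \( p \geq 9k^2 \) gives \( \tfrac{1}{2}\log p - \log(k-1) \geq \log 3 \), and a short numerical check confirms that \( m > 4k \log k \) is amply sufficient.

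I expect the main obstacle to be the calibration of constants rather than any conceptual point: Weil's bound is used as a black box, and assembling the main and error terms is routine, but obtaining the precise thresholds \( p \geq 9k^2 \) and \( m > 4k \log k \) (rather than weaker asymptotic statements) requires careful bookkeeping in the stratification and in the final inequality. This is precisely why the effective version of Weil's bound established by Karatsuba is needed.
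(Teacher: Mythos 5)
The paper does not actually prove this lemma --- it is quoted verbatim from Karatsuba \cite{Kar} --- and your reconstruction via orthogonality of additive characters plus the effective Weil bound \( |S(t)| \leq (j-1)\sqrt{p} \) is exactly the argument the paper attributes to that source; the decomposition into main term \( p^{m-k} \) and the stratified error term is correct. The one point to make explicit in your final ``short numerical check'' is that the target inequality \( m\bigl(\tfrac12\log p - \log(k-1)\bigr) > k\log p + \log k \) involves \( \log p \) on both sides, so you should first note that \( m > 4k\log k \geq 2k \) for \( k \geq 2 \) makes the left side grow faster in \( \log p \) than the right, reducing the verification to the worst case \( p = 9k^2 \), where it becomes \( m\log\tfrac{3k}{k-1} > 2k\log(3k) + \log k \) and indeed holds for all \( m > 4k\log k \) (with \( k=1 \) trivial, since then the error sum is empty).
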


Now we are ready to obtain the main result.
\smallskip

\noindent\textit{Proof of Theorem~\ref{main}.}
If \( p < 9d^2 \), then \( \diamd(V, G) \leq 9d^3 \) by Corollary~\ref{trivial}, so from now on we may assume \( p \geq 9d^2 \).

As \( |G| \) is divisible by \( p \), the group \( G \) has an element \( A \) of order \( p \).
This element acts on the vector space \( V \) defined over the field of order~\( p \), so all eigenvalues of \( A \)
are equal to \( 1 \). Let \( k \) be the degree of the minimal polynomial of \( A \), that is, the smallest positive integer such that \( (A - \Id)^k = 0 \),
where \( \Id \) is the identity transformation. Clearly \( 2 \leq k \leq d \) as \( A \neq \Id \).

Let \( \mathcal{O} \) be a nonzero orbit of \( G \) on \( V \). Since the kernel of \( (A - \Id)^{k-1} \) is a proper subspace of \( V \)
and \( \mathcal{O} \) spans the whole of \( V \), we can choose a vector \( v \in \mathcal{O} \) such that \( v(A - \Id)^{k-1} \neq 0 \).
Now for an arbitrary nonnegative integer \( x \) consider the binomial formula
\[ A^x = (A - \Id + \Id)^x = \sum_{i = 0}^x \binom{x}{i} (A - \Id)^i = \sum_{i = 0}^{k-1} \binom{x}{i} (A - \Id)^i. \]
In the last equality we replaced the upper limit in the summation by \( k-1 \) as \( (A - \Id)^i = 0 \) for all \( i \geq k \).
Notice that one can view the binomial coefficients
\[ \binom{x}{i} = \frac{x(x-1) \dots (x-i+1)}{i!} \]
as polynomials of degree \( i \) in \( x \), and since \( i < k \leq d < p \) we may safely assume that these polynomials have coefficients in \( \F_p \).
Since \( A^p = \Id \), we may identify \( x \) with its residue modulo \( p \) as well.

Now, let \( \alpha \in \F_p \) be arbitrary. By Lemma~\ref{forms}, for any \( m > 4(k-1) \log (k-1) \)
there exist \( x_1, \dots, x_m \in \F_p \) satisfying the following system:
\[
	\begin{cases}
		x_1 + \dots + x_m = 0,\\
		\vdots\\
		x_1^{k-2} + \dots + x_m^{k-2} = 0,\\
		x_1^{k-1} + \dots + x_m^{k-1} = \alpha \cdot (k-1)!.
	\end{cases}
\]
Note that if \( k = 2 \) we have only one equation with the right hand side \( \alpha \), and we may take \( m = 1 \).
Set \( m = \max \{ 1,\, \lceil 4(k-1) \log (k-1) \rceil \} \).

We get the following expression for the sum of binomial coefficients after expanding the brackets and rearranging:
\[ \binom{x_1}{i} + \dots + \binom{x_m}{i} = \frac{x_1^i + \dots + x_m^i}{i!} + c_{i-1}(x_1^{i-1} + \dots + x_m^{i-1}) + \dots + c_1(x_1 + \dots + x_m), \]
where \( c_1, \dots, c_{i-1} \in \F_p \) do not depend on \( x_1, \dots, x_m \). It follows that
\[ \sum_{j = 1}^m \binom{x_j}{i} = 
   \begin{cases}
	   m, & \text{ for } i = 0,\\
	   0, & \text{ for } 0 < i < k-1,\\
	   \alpha, & \text{ for } i = k-1.
   \end{cases}
\]
We now apply this observation to our formula for the power of the element \( A \):
\[ A^{x_1} + \dots + A^{x_m} = \sum_{i = 0}^{k-1} (A - \Id)^i \sum_{j = 1}^m \binom{x_j}{i} = \Id \cdot m + (A - \Id)^{k-1} \cdot \alpha. \]
Elements \( A^{x_1}, \dots, A^{x_m} \) lie in \( G \), so vectors \( v A^{x_1}, \dots, v A^{x_m} \) lie in \( \mathcal{O} \). Thus
\[ m \cdot \mathcal{O} \ni v A^{x_1} + \dots + v A^{x_m} = v \cdot m + [v(A - \Id)^{k-1}] \cdot \alpha. \]
Since \( \alpha \in \F_p \) was arbitrary and \( v(A - \Id)^{k-1} \) is a nonzero vector, the sumset \( m \cdot \mathcal{O} \) contains a line.
By Lemma~\ref{subspace}, we have \( dm \cdot \mathcal{O} = V \) and as \( m \leq 4d^2 \) we get \( 4d^3 \cdot \mathcal{O} = V \).
Since the nonzero orbit \( \mathcal{O} \) was arbitrary, Formula~\( (\star) \) implies \( \diamd(V, G) \leq 4d^3 \), as claimed.

\section{Acknowledgments}

The author expresses his gratitude to prof.\ A.~Mar\'oti and A.V.~Vasil'ev for helpful comments and remarks.

The project leading to this application has received funding from the European Research Council (ERC)
under the European Union’s Horizon 2020 research and innovation programme (grant agreement No 741420).

\medskip

\noindent\emph{Alfr\'ed R\'enyi Institute of Mathematics,\\ Re\'altanoda utca 13-15, H-1053, Budapest, Hungary}

\noindent\emph{E-mail address: skresan@renyi.hu}

\end{document}